\newtheorem{thm}{Theorem}[section]
\newtheorem{lem}[thm]{Lemma}
\newtheorem{prop}[thm]{Proposition}
\newtheorem{prob}[thm]{Problem}
\newenvironment{proof}{\noindent {\bf
Proof.}}{\rule{3mm}{3mm}\par\medskip}
\def\marker{\>\hbox{${\vcenter{\vbox{
    \hrule height 0.4pt\hbox{\vrule width 0.4pt height 6pt
    \kern6pt\vrule width 0.4pt}\hrule height 0.4pt}}}$}\>}
\def\qed{ \hfill $\square$}
\newcommand{\es}{{\rm es}_{\chi}}
\title{On critical graphs for the { {chromatic edge-stability number}}}
\author{\small {Hui Lei$^1$, Xiaopan Lian$^2$, Xianhao Meng$^3$, Yongtang Shi$^2$, Yiqiao Wang$^4$\thanks{The corresponding author.}}\\
{\small $^1$ School of Statistics and Data Science, LPMC and KLMDASR}\\
{\small Nankai University, Tianjin 300071, China}\\
{\small $^2$ Center for Combinatorics and LPMC}\\
{\small Nankai University, Tianjin 300071, China}\\
{\small $^3$ College of Software, Nankai University, Tianjin 300350, China}\\
{\small $^4$ School of Management}\\
{\small Beijing University of Chinese Medicine, Beijing 100029, China}\\
{\small Email: hlei@nankai.edu.cn; xiaopanlian@mail.nankai.edu.cn;} \\ {\small mm17862903862@163.com; shi@nankai.edu.cn; yqwang@bucm.edu.cn}\\
}
\date{\today}
\begin{document}

\maketitle
\begin{abstract}
The { {\em chromatic edge-stability number}} $es_{\chi}(G)$ of a graph $G$ is the minimum number of edges whose removal results in a spanning subgraph with the chromatic number smaller than that of $G$. A graph $G$ is called {\em $(3,2)$-critical} if $\chi(G)=3$, $es_{\chi}(G)=2$ and for any edge $e\in E(G)$, $es_{\chi}(G-e)<es_{\chi}(G)$. In this paper, we characterize $(3,2)$-critical graphs which contain at least five odd cycles. This answers a question proposed by
 Bre\v{s}ar, Klav\v{z}ar and Movarraei in [Critical graphs for the chromatic edge-stability number, {\it Discrete Math.} {\bf 343}(2020) 111845].\\

\noindent\textbf{Keywords:} { {chromatic edge-stability number}}; critical graphs; odd cycles\\

\end{abstract}

\section{Introduction}

Let $G=(V(G),E(G))$ be a graph. A function $c:V(G)\rightarrow[k]=\{1,\ldots,k\}$ is called
a {\em proper coloring} of $G$, if $c(u)\neq c(v)$ for any $uv\in E(G)$. The {\em chromatic number} of $G$, denoted by $\chi(G)$, is the smallest integer $k$ such that $G$ admits a proper coloring using $k$ colors. The { {\em chromatic edge-stability number}} of $G$, denoted by $\es(G)$, is the minimum number of edges of $G$ whose removal results in a graph with the chromatic number smaller than that of $G$. The chromatic edge-stability number was first studied by Staton~\cite{14}, which provided upper bounds of $es_{\chi}$ for regular graphs in terms of the size of a given graph. The invariant was subsequently  investigated in~\cite{arumugam-2008,2,8}.
For a graph $G$ with $\chi(G)=3$, the chromatic edge-stability number is equal to the bipartite edge frustration \cite{4}, which is defined as the smallest number of edges that have to be deleted from $G$ to obtain a bipartite spanning subgraph.

For any $u,v\in V(G)$, let $d_G(u,v)$ denote the length of the shortest $(u,v)$-path. For any $A\subseteq E(G)$, let $G-A$ be the graph obtained from $G$ by deleting all the edges in $A$. If $A=\{e\}$, we simply write $G-e$ instead of $G-\{e\}$. We say a graph  $G$ is {\em edge-stability critical} if $es_{\chi}(G-e)<es_{\chi}(G)$ holds for every edge $e\in E(G)$. A graph $G$ is called {\em $(k, \ell)$-critical}, if $G$ is an edge-stability critical graph with $\chi(G)=k$ and $es_{\chi}(G)=\ell$, for $k,\ell\geq2$. Naturally, a graph $G$ is $(k,2)$-critical if and only if for every edge $e\in  E(G)$, $\chi(G-e)=k$, and there exists an edge $f\in E(G-e)$ such that $\chi(G-\{e,f\})=k-1$. In this paper we focus on $(3,2)$-critical graphs and the graphs we consider are simple.

In \cite{1}, the authors proved the following theorem.
\begin{thm}[\cite{1}]
$\mathcal{A}\cup \mathcal{B}\cup\mathcal{C}\cup\mathcal{D}$ is the family of $(3,2)$-critical graphs (without isolated vertices) that contain at most four odd cycles. \end{thm}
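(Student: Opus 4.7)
The plan is to prove a characterization with two directions. The easy direction is to verify that each explicit graph in $\mathcal{A} \cup \mathcal{B} \cup \mathcal{C} \cup \mathcal{D}$ has chromatic number $3$, has chromatic edge-stability number exactly $2$, and deleting any edge drops the chromatic edge-stability number; this is a routine case check once the four families are on the table. The substance of the theorem is the converse: any $(3,2)$-critical graph $G$ without isolated vertices having at most four odd cycles must be one of the listed types.

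For the converse I would begin with three structural observations that drive the whole argument. First, because $\es(G) = 2$, the odd cycles of $G$ admit a $2$-edge transversal but no $1$-edge transversal. Second, because $G$ is edge-stability critical, every edge $e \in E(G)$ must lie on some odd cycle, since otherwise $G-e$ would have the same family of odd cycles and therefore $\es(G-e) = 2$. Third, for every edge $e$, the odd cycles of $G-e$ must have a single common edge, because the criticality condition forces $\es(G-e) \le 1$ while $\chi(G-e) = 3$. Together these say that $E(G)$ is the union of its odd cycles, and the intersection pattern of the odd cycles is extremely constrained.

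Then I would split on the number $k$ of odd cycles. The case $k=1$ is immediately ruled out by $\es(G) = 1$. For $k=2$, the two odd cycles $C_1, C_2$ must be edge-disjoint (else one edge would cover both), so by observation two $E(G) = E(C_1) \cup E(C_2)$; a further analysis of whether $C_1$ and $C_2$ are vertex-disjoint, meet in a single vertex, or share a common path, produces one of the listed families. For $k=3$, at least one edge must lie in two of the three cycles, and I would enumerate the sharing patterns (one edge in all three cycles, or two disjoint pairs of sharing, etc.), ruling out each inconsistent configuration using observation three. For $k=4$, the two-edge cover forces either two pairs of cycles each sharing an edge, or three cycles through one edge together with a fourth cycle through another edge; again each admissible pattern is pinned down by applying observations two and three.

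The main obstacle, as I see it, is the case $k = 4$, and more generally the bookkeeping of \emph{induced} new odd cycles. Whenever two odd cycles $C_i, C_j$ share a path $P$, the symmetric difference $(C_i \triangle C_j) \cup P$ can produce an additional odd cycle, so the hypothesis ``at most four odd cycles'' bounds how much overlap is permissible. Much of the work will be showing that once the sharing structure has a certain minimum complexity, the number of odd cycles jumps above four, contradicting the hypothesis. Carefully enumerating the shared-path lengths and the parities involved, so that the induced odd-cycle count stays at most four and every edge still lies on some odd cycle, is what ultimately reduces the universe of possibilities to the four explicit families $\mathcal{A}, \mathcal{B}, \mathcal{C}, \mathcal{D}$.
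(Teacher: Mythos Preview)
The paper does not prove this theorem. It is stated as \texttt{\textbackslash begin\{thm\}[\textbackslash cite\{1\}]} and is quoted verbatim from Bre\v{s}ar, Klav\v{z}ar and Movarraei, \emph{Critical graphs for the chromatic edge-stability number}, Discrete Math.\ \textbf{343} (2020) 111845. The present paper takes this classification of $(3,2)$-critical graphs with at most four odd cycles as known background and then proves the complementary statement (Theorem~\ref{mainthm}) for graphs with at least five odd cycles. So there is no proof in this paper against which to compare your proposal.

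As to the proposal itself: your three structural observations are correct and are exactly the tools one needs (the second one is even reproved in the present paper, inside the proof of Lemma~\ref{conn}). Your treatment of $k=2$ is essentially right, although note that ``share a common path'' cannot occur once you have established edge-disjointness; what you need instead is to argue that two edge-disjoint odd cycles sharing two or more vertices would generate additional odd cycles, forcing $k\ge 3$, so for $k=2$ they share at most one vertex and you land in $\mathcal{A}$ or $\mathcal{B}$. For $k=3$ and $k=4$ your outline is only a plan, not a proof: the real content of the theorem is precisely the enumeration that pins the graph down to the theta-type family $\mathcal{C}$ or one of the four $K_4$-subdivision types in $\mathcal{D}$, and ruling out every other configuration using your third observation together with parity bookkeeping is a substantial case analysis that you have not actually carried out. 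If you want to see how it is done, you will need to consult the cited paper~\cite{1}, since the argument is not reproduced here.
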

These four graph families are defined as follows. Let $G+H$ denote the disjoint union of graphs $G$ and $H$. We use $C_n$ to denote the cycle on $n$ vertices. A path or a cycle is odd if it has an odd number of edges, otherwise, we say it even. Then the families of $(3,2)$-critical graphs mentioned in \cite{1} are as follows. Let $\mathcal{A}=\{C_{2k+1}+C_{2\ell+1}\mid k, \ell\geq 1\}$ and let $\mathcal{B}$ be the family of graphs that are obtained from $C_{2k+1}+C_{2\ell+1},k, \ell\geq 1$, by identifying a vertex of $C_{2k+1}$ and a vertex of $C_{2\ell+1}$. Let $x_i$, $y_i$ be the end vertices of the paths $Q_i,i\in[4]$, exactly two of the $Q_i$ are odd, and at most one of them is of length one. The family $\mathcal{C}$ consists of the graphs that are obtained from such four paths, by identifying the vertices $x_1$, $x_2$, $x_3$, and $x_4$ and also identifying the vertices $y_1$, $y_2$, $y_3$, and $y_4$. The family $\mathcal{D}$ consists of the following subdivisions of the graph $K_4$: (i) all the subdivided paths are of odd length, (ii) exactly three of the paths are odd, and these three paths induce an odd cycle or a path, (iii) exactly two of the paths are odd, and these two paths are vertex disjoint, and (iv) exactly two of the paths are even and these two paths have a common vertex.
%

At the end of \cite{1}, Bre\v{s}ar, Klav\v{z}ar, and Movarraei defined the family $\mathcal{E}$, which is obtained from the disjoint union of $k$ even cycles $C_{2n_1},\ldots, C_{2n_k}$ as follows. For each $i\in[k]$, let $x_i$ and $y_i$ be any two distinct vertices of $C_{2n_i}$, where they only require that $\sum_{i=1}^kd_{C_{2n_i}}(x_i,y_i)$ is odd. A graph $G\in \mathcal{E}$ is obtained by identifying $y_i$ and $x_{i+1}$ for $i\in[k-1]$, and identifying $y_k$ and $x_1$. They proposed the following problem and suspected it has a positive answer.

\begin{prob}[\cite{1}]
Is it true that $\mathcal{A}\cup \mathcal{B}\cup\mathcal{C}\cup\mathcal{D}\cup \mathcal{E}$ is the family of $(3,2)$-critical graphs (without isolated vertices)?
\end{prob}

We answer this problem by giving a positive proof.

\begin{thm}\label{mainthm}
$\mathcal{E}$ is the family of $(3,2)$-critical graphs (without isolated vertices) which contain at least five odd cycles.
\end{thm}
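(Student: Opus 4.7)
The plan is to prove the theorem by establishing both containments. For the easy direction, let $G\in\mathcal{E}$ be obtained by cyclically gluing even cycles $C_{2n_1},\dots,C_{2n_k}$ at hub vertices, with $C_{2n_j}$ spanning between consecutive hubs $v_{j-1}=x_j$ and $v_j=y_j$ (and $k\ge 3$ is forced by the five-odd-cycles hypothesis). I first exhibit an odd ``necklace'' cycle: in each $C_{2n_j}$ take the $x_j$-$y_j$ arc of length $d_{C_{2n_j}}(x_j,y_j)$; by hypothesis the concatenation has odd total length. Together with an explicit proper $3$-coloring this yields $\chi(G)=3$. For $\es(G)\le 2$, I delete one edge from each of the two $x_j$-$y_j$ arcs inside a single $C_{2n_j}$; any remaining odd cycle would have to cross $C_{2n_j}$ between $v_{j-1}$ and $v_j$, so the result is bipartite. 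The lower bound $\es(G)\ge 2$ and the criticality condition both follow from a count: there are exactly $2^k\ge 8$ necklace cycles (all odd), deleting a single edge kills only the $2^{k-1}$ that use the chosen arc, and the natural partner for an edge $e$ inside $C_{2n_j}$ is any edge on the opposite $x_j$-$y_j$ arc.

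For the harder direction, let $G$ be $(3,2)$-critical with at least five odd cycles. I fix a bipartite transversal $\{e_1,e_2\}$ with $e_i=u_iv_i$ and a bipartition $(A,B)$ of $H:=G-\{e_1,e_2\}$. Since $\es(G)\ge 2$, each of $e_1,e_2$ must be monochromatic in $(A,B)$ and each pair $\{u_i,v_i\}$ must lie in a single component of $H$; otherwise a suitable choice of bipartition would extend across $e_1$ or $e_2$ and make $G-e_j$ bipartite for some $j$. A parity check then shows every odd cycle of $G$ contains exactly one of $e_1,e_2$, so the odd cycles split as $\mathcal{O}_1\sqcup\mathcal{O}_2$ with $\mathcal{O}_i$ passing through $e_i$. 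Pigeonhole and $|\mathcal{O}_1|+|\mathcal{O}_2|\ge 5$ give, say, $|\mathcal{O}_1|\ge 3$, i.e., at least three distinct even $u_1$-$v_1$ paths in $H$. Criticality further forces every edge of $G$ to lie on some odd cycle, whence $E(G)=\{e_1,e_2\}\cup P_1\cup P_2$, where $P_i$ is the edge set of even $u_i$-$v_i$ paths in $H$.

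The heart of the proof is upgrading these observations to the cyclic necklace structure of $\mathcal{E}$. For each edge $e\in P_1$ (analogously $P_2$), the partner $f$ supplied by criticality must meet every odd cycle avoiding $e$; in particular $f$ lies on every even $u_2$-$v_2$ path in $H$, so these paths funnel through a common set of hub vertices. Iterating over varying $e$, I would show that the even $u_i$-$v_i$ path system in $H$ consists of two internally disjoint arcs joining a common sequence of hubs, so that $P_1$ and $P_2$ each organize into families of even cycles pairwise glued at hubs, and $e_1,e_2$ close this chain into the cyclic sequence of even cycles required by $\mathcal{E}$. A case analysis on the relative position of $\{u_1,v_1\}$ versus $\{u_2,v_2\}$ (coincident, lying on a common hub arc, or separated) completes the identification.

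The main obstacle is precisely this last case analysis: several local configurations are a priori consistent with the partial structure above, and each non-cyclic one must be excluded by exhibiting either an odd cycle that violates criticality or a bipartite transversal of size one. The hypothesis of at least five odd cycles is indispensable, since it forces the number $k$ of glued even cycles to be at least three; the existing characterization of $(3,2)$-critical graphs with at most four odd cycles then absorbs the small-$k$ residuals that would otherwise collapse into $\mathcal{A}\cup\mathcal{B}\cup\mathcal{C}\cup\mathcal{D}$.
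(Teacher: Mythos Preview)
Your forward direction is fine and in fact more detailed than the paper, which essentially takes for granted that graphs in $\mathcal{E}$ (with $k\ge 3$) are $(3,2)$-critical with at least five odd cycles.

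For the reverse direction your approach is genuinely different from the paper's, and the gap is exactly where you locate it. The paper never fixes a bipartite transversal $\{e_1,e_2\}$. Instead it shows (Theorem~\ref{mthm}, resting on Theorem~\ref{thm1}) that among seven possible ``two odd cycles plus an ear'' configurations $\mathcal{G}_1,\dots,\mathcal{G}_7$, only those in $\mathcal{G}_4\cup\mathcal{G}_5$ with a specified parity can sit inside $G$; any such $H$ already lies in the auxiliary class $\mathcal{E}'$. Nonseparability (Lemma~\ref{conn}) then gives an open-ear decomposition of $G$ starting from $H$, and Lemma~\ref{mainlem} shows that adding an open ear to a member of $\mathcal{E}'\cup\mathcal{E}$ either keeps one inside $\mathcal{E}'\cup\mathcal{E}$ or creates a forbidden subgraph from $\mathcal{G}_1\cup\mathcal{G}_2\cup\mathcal{G}_3\cup\mathcal{G}_6\cup\mathcal{G}_7$. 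Induction along the ear decomposition forces $G\in\mathcal{E}'\cup\mathcal{E}$, and $\mathcal{E}'$ is excluded in one line because every graph there has an edge whose deletion already yields a bipartite graph.

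Your plan, by contrast, tries to read off the necklace structure directly from criticality partners, and the key inference does not go through as stated. The claim ``for $e\in P_1$ the partner $f$ lies on every even $u_2$--$v_2$ path in $H$'' requires both $e\notin P_2$ and $f\ne e_2$. In an actual graph from $\mathcal{E}$ with transversal $\{e_1,e_2\}$ chosen inside a single $C_{2n_j}$, every edge of every other $C_{2n_i}$ lies in $P_1\cap P_2$, and its criticality partner sits on the \emph{opposite arc of that same $C_{2n_i}$}, which misses half of the $u_2$--$v_2$ paths. So the ``funnel through common hubs'' conclusion cannot be drawn from this argument. More fundamentally, the assertion that the even $u_i$--$v_i$ path system ``consists of two internally disjoint arcs joining a common sequence of hubs'' is exactly the target statement; nothing in your outline excludes, for instance, three internally disjoint even $u_1$--$v_1$ paths branching at a non-hub vertex, or an extra ear attached inside one of the even cycles. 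Eliminating such configurations is precisely what Theorems~\ref{thm1} and~\ref{mthm} do, and your proposal offers no substitute for that analysis. The envisioned ``case analysis on the relative position of $\{u_1,v_1\}$ versus $\{u_2,v_2\}$'' would, if carried out, have to reproduce that work in a different language.
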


\section{Properties of $(3, 2)$-critical graphs}
In this section,  we establish some structural results on $(3, 2)$-critical graphs.
The following lemmas and propositions
were proved in \cite{BM,1} and will be used in this paper.
\begin{lem}[\cite{1}]\label{lemma1}
If $G$ is a $(3, 2)$-critical graph that contains at least three odd cycles, then every two distinct odd cycles intersect
in more than one vertex.
\end{lem}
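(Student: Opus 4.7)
My approach is proof by contradiction. Suppose $C_1$ and $C_2$ are two distinct odd cycles of $G$ with $|V(C_1)\cap V(C_2)|\le 1$, and let $C_3$ be a third odd cycle (guaranteed by hypothesis). I will force $G=C_1\cup C_2$ and then show that such a graph admits no cycle other than $C_1$ and $C_2$, contradicting the existence of $C_3$.

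First, sharing at most one vertex implies $E(C_1)\cap E(C_2)=\emptyset$, because any common edge would contribute two common vertices. Since $\chi(G)=3$, an edge set makes $G$ bipartite if and only if it hits every odd cycle; combined with $\es(G)=2$, this means every minimum odd-cycle edge cover of $G$ must consist of exactly one edge of $C_1$ and one edge of $C_2$.

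Next I would invoke $(3,2)$-criticality: for every edge $e\in E(G)$ there exists $f\in E(G)$ with $\chi(G-\{e,f\})=2$, so $\{e,f\}$ hits every odd cycle. By the previous observation, this forces $e\in E(C_1)\cup E(C_2)$ for every edge $e$. Since $C_1,C_2\subseteq G$, I get $E(G)=E(C_1)\cup E(C_2)$, and together with the no-isolated-vertices hypothesis this yields $G=C_1\cup C_2$.

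Finally I would show that such a $G$ contains no cycle other than $C_1$ and $C_2$. The vertex-disjoint case is immediate since $G$ is then a disjoint union of two cycles. If $C_1$ and $C_2$ share a single vertex $v$, then every vertex of $G$ other than $v$ has degree $2$; any cycle must visit some such vertex $u$, and the degree-$2$ constraint propagates around the side of the figure-eight containing $u$ so that the cycle closes up as $C_1$ or $C_2$. Either way the existence of $C_3$ gives the desired contradiction. The main obstacle I anticipate is this last structural claim: one must carefully rule out a cycle that enters $v$ along one of $C_1,C_2$ and leaves along the other, but the degree-$2$ propagation on non-$v$ vertices handles this cleanly, and the rest of the proof is a direct consequence of the edge-cover observation combined with $(3,2)$-criticality.
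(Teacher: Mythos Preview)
The paper does not actually prove this lemma; it is quoted from \cite{1} and used as a black box. So there is no ``paper's own proof'' to compare against.

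That said, your argument is correct. The key step---that any two-edge set making $G$ bipartite must contain one edge from each of the edge-disjoint odd cycles $C_1,C_2$, and hence every edge of $G$ lies in $E(C_1)\cup E(C_2)$ by $(3,2)$-criticality---is exactly the right idea, and the figure-eight analysis at the end is sound (once every vertex except the possible articulation point $v$ has degree~$2$, any cycle through a degree-$2$ vertex is forced to coincide with $C_1$ or $C_2$).

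One minor point: the lemma as stated does not carry the ``no isolated vertices'' hypothesis, so you should not invoke it. Fortunately you do not need it: from $E(G)=E(C_1)\cup E(C_2)$ alone it already follows that every cycle of $G$ is a cycle of the graph $C_1\cup C_2$, regardless of whether $G$ has extra isolated vertices. So simply drop that clause and conclude directly that $G$ has at most two odd cycles, contradicting the existence of $C_3$.
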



Let $\mathcal{G}_i$ ($i\in[7]$) be the family of graphs as shown in Figure \ref{fig15}. For $i\in[7]$ and $G_i\in\mathcal{G}_i$, three internally disjoint $(x, y)$-paths of $G_i$ formed by solid lines from left to right are denoted as $Q_1,Q_2,Q_3$. Let $D_1=Q_1\cup Q_2$ and $D_2=Q_2\cup Q_3$. Every graph in $\mathcal{G}_i$ ($i\in[7]$) satisfies that $D_1$ and $D_2$ are odd cycles, and the dotted line is internally disjoint from these solid lines. Let $\mathcal{G}=\bigcup_{i\in[7]}\mathcal{G}_i$.
\begin{prop}[\cite{1}]\label{prop}
If $G$ is a $(3,2)$-critical graph that contains at least three odd cycles, then there exists an $H\in\mathcal{G}$ such that $H\subseteq G$.

\end{prop}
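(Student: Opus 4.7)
My plan is to assemble the subgraph $H$ from three odd cycles, producing the solid theta-skeleton and the dotted line separately. By hypothesis $G$ contains three odd cycles $C_1,C_2,C_3$, and Lemma~\ref{lemma1} guarantees that each pair of them intersects in at least two vertices.

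First I would build the skeleton from $C_1\cup C_2$. Choose a maximal common sub-path $P$ of $C_1$ and $C_2$; it has a well-defined pair of endpoints $x,y$ because $|V(C_1)\cap V(C_2)|\ge 2$. Set $Q_2=P$, and let $Q_1,Q_3$ be the $(x,y)$-arcs of $C_1$ and $C_2$, respectively, that avoid the interior of $P$. Then $Q_1,Q_2,Q_3$ are internally disjoint $(x,y)$-paths, and $D_1=Q_1\cup Q_2=C_1$, $D_2=Q_2\cup Q_3=C_2$ are odd cycles, matching the solid-line part common to every graph in $\mathcal{G}$.

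Next I would extract the dotted line from $C_3$. Since $C_1,C_2\subseteq H_0:=Q_1\cup Q_2\cup Q_3$, we have $V(C_3)\cap V(C_i)\subseteq V(C_3)\cap V(H_0)$ for $i\in\{1,2\}$, so $|V(C_3)\cap V(H_0)|\ge 2$. Take a maximal sub-path $R$ of $C_3$ whose endpoints lie on $H_0$ and whose interior is disjoint from $H_0$; $R$ plays the role of the dotted line. The seven families $\mathcal{G}_1,\ldots,\mathcal{G}_7$ are then distinguished by the locations of the two endpoints of $R$ on the skeleton: both on $Q_2$, one on $Q_2$ and one on $Q_1$ or $Q_3$, both on $Q_1$ (or on $Q_3$), at $x$ or $y$, and so on.

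The main obstacle is the exhaustive case analysis in this last step and the degeneracies around it. One must verify that every possible placement of the endpoints of $R$ yields a subgraph isomorphic to some $\mathcal{G}_i$, and handle sub-cases such as $C_3\subseteq H_0$ (where one reselects which two of $C_1,C_2,C_3$ play the structural roles), $V(C_1)\cap V(C_2)$ having several maximal common sub-paths (where $x,y$ must be chosen so that leftover shared arcs can be absorbed into the dotted-line part), or $R$ reducing to a single edge. Parity bookkeeping is also delicate because $Q_2$ is shared by $D_1$ and $D_2$, so any rerouting performed during the analysis must simultaneously preserve the odd parity of both cycles.
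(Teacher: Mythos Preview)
The paper does not prove this proposition at all: it is quoted verbatim from \cite{1} (Bre\v{s}ar, Klav\v{z}ar, Movarraei) and used as a black box, so there is no ``paper's own proof'' to compare your attempt with. What follows is therefore an assessment of your sketch on its own terms.

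Your outline has a genuine gap at the very first step, the construction of the solid theta skeleton. From Lemma~\ref{lemma1} you only know $|V(C_1)\cap V(C_2)|\ge 2$; you do \emph{not} know that $C_1\cap C_2$ is a single path. If the intersection consists of several maximal common sub-paths (or even of isolated common vertices with no shared edge), then choosing one such sub-path $P$ as $Q_2$ does not make $Q_1$ and $Q_3$ internally disjoint: the other components of $C_1\cap C_2$ sit inside both of them. Your sentence ``it has a well-defined pair of endpoints $x,y$ because $|V(C_1)\cap V(C_2)|\ge 2$'' conflates two different things; a maximal common sub-path can perfectly well be a single vertex. You flag this as an obstacle at the end, but the proposed fix --- ``leftover shared arcs can be absorbed into the dotted-line part'' --- does not work as stated: each $\mathcal{G}_i$ has exactly one dotted path internally disjoint from the theta, whereas leftover intersections may produce several such arcs, and you would also have to re-argue that the two faces $D_1,D_2$ of your theta remain odd after rerouting.

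A cleaner way to build the skeleton is to pick $x,y\in V(C_1)\cap V(C_2)$ so that one $(x,y)$-arc of $C_1$ has no interior vertex in $C_2$ (such a ``consecutive'' pair always exists), take that arc as $Q_1$, and then split $C_2$ at $x,y$ to obtain $Q_2,Q_3$; a short parity argument using that $C_1$ and $C_2$ are both odd lets you choose which arc of $C_2$ to call $Q_2$ so that $Q_1\cup Q_2$ and $Q_2\cup Q_3$ are both odd. Any surplus structure in $C_1\cup C_2$ then already supplies an admissible dotted line before $C_3$ is even consulted. Only in the case where $C_1\cap C_2$ really is a single path do you need $C_3$ (together with a second application of Lemma~\ref{lemma1}) to manufacture the ear. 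The remaining case split over the seven families~$\mathcal{G}_1,\dots,\mathcal{G}_7$ is routine once the skeleton is in hand, but you should be explicit that the endpoints of the ear may coincide with $x$ or $y$, which is what distinguishes $\mathcal{G}_3,\mathcal{G}_4,\mathcal{G}_5$ from $\mathcal{G}_1,\mathcal{G}_2$.
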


\begin{figure}[htbp]
  \begin{center}
  \includegraphics[scale=0.41]{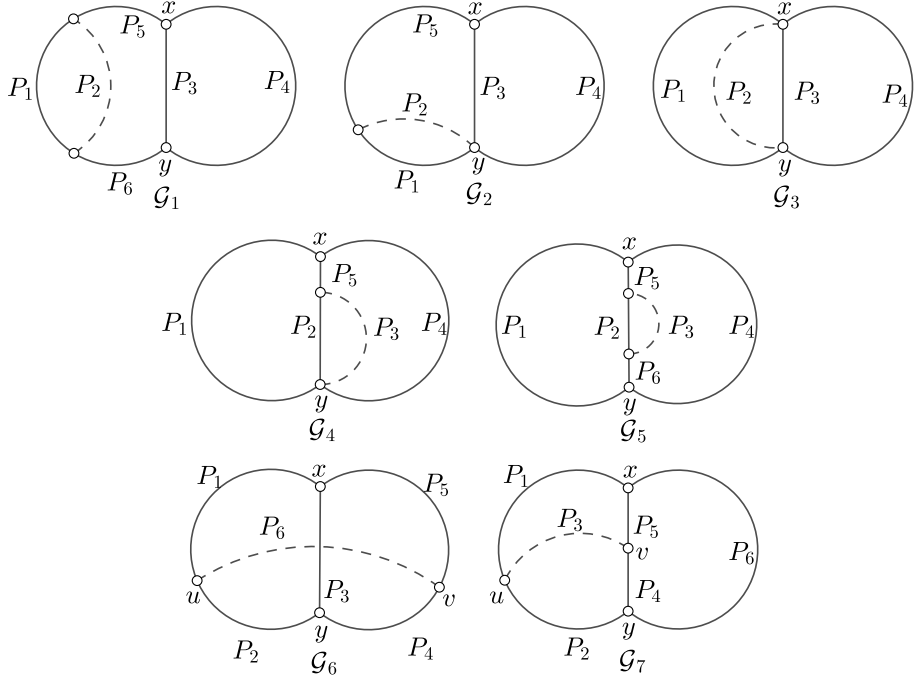}
  \caption{ Seven families of subgraphs of (3,2)-critical graphs.}\label{fig15}
  \end{center}
  \end{figure}

A graph $G$ is {\em connected} if there is a $(u,v)$-path in $G$ for any $u,v\in V(G)$. A {\em separation} of a connected graph is a decomposition of the graph into two
nonempty connected subgraphs which have just one vertex in common. This common vertex is called a {\em separating vertex} of the graph.
A graph is {\em nonseparable} if it is connected and has no separating vertices. Let $F$ be a  nontrivial proper subgraph of a graph $G$. An {\em ear} of $F$ in $G$ is a nontrivial path in $G$ whose endpoints lie in $F$ but whose internal vertices do not. An ear is an {\em open ear} if the endpoints of the path are distinct.
For completeness, we present the proof of the following proposition from \cite{BM}.

\begin{prop}[\cite{BM}]\label{ear}
Let $F$ be a nontrivial proper subgraph of a nonseparable graph $G$. Then $F$ has an open ear in $G$.
\end{prop}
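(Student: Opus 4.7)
The plan is to split on whether $V(F)=V(G)$ or not. If $V(F)=V(G)$, then properness of $F$ gives an edge $uv\in E(G)\setminus E(F)$; since $G$ is simple we have $u\neq v$, and the single edge $uv$ is itself an open ear of $F$ in $G$. So I will spend the real work on the case where $V(G)\setminus V(F)\neq\emptyset$.

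In that case, I would examine the connected components of $G-V(F)$. For each such component $C$, let $N(C)$ denote the set of vertices of $V(F)$ that are adjacent in $G$ to some vertex of $C$; because $G$ is connected, $N(C)\neq\emptyset$. If there exists a component $C$ with $|N(C)|\geq 2$, I would pick two distinct vertices $a,b\in N(C)$, choose vertices $a',b'\in V(C)$ with $aa',bb'\in E(G)$, and connect $a'$ to $b'$ by a path inside the connected graph $C$. Prepending $a$ and appending $b$ produces a path whose endpoints are distinct vertices of $F$ and whose internal vertices all lie in $V(C)\subseteq V(G)\setminus V(F)$, i.e., an open ear.

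To finish, I would rule out the remaining possibility that every component $C$ of $G-V(F)$ satisfies $|N(C)|=1$. Picking any such component $C$ with unique attachment vertex $v\in V(F)$, I would argue that $v$ separates $V(C)$ from $V(F)\setminus\{v\}$ in $G$; since $F$ is nontrivial (hence has at least two vertices), $V(F)\setminus\{v\}$ is nonempty, so $v$ is a separating vertex of $G$, contradicting nonseparability.

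The main obstacle, as in most ear-style arguments, is sorting out the degenerate cases cleanly rather than the central topological idea. In particular I want to make sure the interpretation of ``nontrivial'' (that $F$ contains an edge, and hence at least two vertices) is used exactly when I invoke $V(F)\setminus\{v\}\neq\emptyset$, and that the case $V(F)=V(G)$ is handled separately so that I may legitimately speak of components of $G-V(F)$ in the main case. Once those boundary issues are pinned down, the contradiction with nonseparability in the last step is immediate.
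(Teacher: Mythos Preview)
Your argument is correct. The case split (spanning versus non-spanning) matches the paper, and in the spanning case both proofs simply take an edge of $E(G)\setminus E(F)$ as the ear. In the non-spanning case, however, the paper takes a shorter route: it picks an edge $xy$ with $x\in V(F)$ and $y\notin V(F)$, invokes nonseparability directly to conclude that $G-x$ is connected, and then takes any $(y,F-x)$-path $Q$ in $G-x$; the concatenation $xyQ$ is the desired open ear. Your version instead analyzes the components of $G-V(F)$ and their attachment sets in $V(F)$, deriving a cut vertex if some component has a single attachment. Both arguments exploit exactly the same obstruction (a unique attachment point would be a separating vertex), but the paper's formulation avoids the component bookkeeping by deleting just the one boundary vertex $x$ and using connectedness of $G-x$ in a single stroke. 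Your approach, on the other hand, makes the structural picture of how $G$ sits over $F$ more explicit, which can be handy when one wants to iterate the ear construction.
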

\begin{proof}
If $F$ is a spanning subgraph of $G$, then $E(G)\setminus E(F)$ is nonempty because,
by hypothesis, $F$ is a proper subgraph of $G$. Any edge in $E(G)\setminus E(F)$ is then an
ear of $F$ in $G$. We may suppose, therefore, that $F$ is not spanning.
Since $G$ is connected, there is an edge $xy$ of $G$ with $x\in V(F)$ and $y\in V(G)\setminus V(F)$. Because $G$ is nonseparable, $G-x$ is connected. So there is a $(y,F-x)$-path
$Q$ in $G-x$. The path $P:= xyQ$ is an open ear of $F$.
\end{proof}

We first prove  the following lemma.
\begin{lem}\label{conn}
If $G$ is a $(3,2)$-critical graph that contains at least three odd cycles (without isolated vertices), then $G$ is nonseparable.
\end{lem}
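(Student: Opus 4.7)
The plan is to argue by contradiction: suppose $G$ has a separating vertex $v$, and decompose $G = G_1 \cup G_2$ with $V(G_1) \cap V(G_2) = \{v\}$, where both $G_1$ and $G_2$ are connected. Since $v$ is a genuine separating vertex, $G_1 - v$ and $G_2 - v$ are both nonempty; combined with the absence of isolated vertices in $G$, this forces each of $G_1, G_2$ to contain an edge. Let $b(H)$ denote the bipartite edge frustration of $H$; because $\chi(G) = 3$, the chromatic edge-stability coincides with this, so $b(G) = \es(G) = 2$, and similarly for any edge $e$ the critical condition gives $b(G-e) = \es(G-e) \le 1$.

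Two structural observations drive the proof. First, no cycle of $G$ can use edges of both $G_1$ and $G_2$, since it would have to traverse the cut vertex $v$ twice; hence every odd cycle of $G$ lies entirely in one of the two parts. Second, bipartite edge frustration is additive across a cut vertex: $b(G) = b(G_1) + b(G_2)$. This is because a set $A \subseteq E(G)$ makes $G-A$ bipartite if and only if its restrictions to $G_1$ and $G_2$ do, the glueing being carried out by permuting colors on one side so that the two $2$-colorings agree at $v$.

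I then split into two cases according to where the odd cycles live. If all odd cycles lie in the same part, say $G_1$, then $b(G_2) = 0$ and $b(G_1) = 2$; taking any edge $e \in E(G_2)$ yields $b(G-e) = b(G_1) + b(G_2 - e) = 2$, contradicting $\es(G-e) \le 1$. If instead both parts contain odd cycles, then $b(G_1), b(G_2) \ge 1$ and the identity $b(G) = 2$ forces $b(G_1) = b(G_2) = 1$. For any $e \in E(G_1)$, the critical condition gives $b(G_1 - e) + 1 = b(G-e) \le 1$, so $G_1 - e$ is bipartite, i.e.\ $e$ lies on every odd cycle of $G_1$. Fixing any odd cycle $C \subseteq G_1$, this forces $E(G_1) \subseteq E(C)$ and hence $E(G_1) = E(C)$; connectedness of $G_1$ together with no isolated vertices in $G$ then forces $V(G_1) = V(C)$, so $G_1 = C$ is a single odd cycle. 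Symmetrically $G_2$ is a single odd cycle, so $G$ has exactly two odd cycles, contradicting the hypothesis of at least three.

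The main subtlety is the additivity $b(G) = b(G_1) + b(G_2)$ across a cut vertex, which I would justify carefully via the color-permutation argument sketched above; once this is in hand the casework is routine. A secondary point worth spelling out is that ``every edge is on every odd cycle of $G_1$'' implies the odd cycle of $G_1$ is \emph{unique}: two odd cycles each containing all edges of the other must coincide. The proofs for the symmetric assumption (all odd cycles in $G_2$, or the role of $G_1$ and $G_2$ exchanged in Case~2) are identical, so I would invoke symmetry rather than repeat them.
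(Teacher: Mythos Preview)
Your proof is correct and takes a genuinely different route from the paper's. The paper first shows that every edge of a $(3,2)$-critical graph lies in some odd cycle (otherwise removing its partner edge $f$ alone would already make $G$ bipartite), and then invokes Lemma~\ref{lemma1} (any two distinct odd cycles intersect in more than one vertex) to conclude that all odd cycles live on one side of any separation; the other side then contains an edge in no odd cycle, a contradiction. Your argument is more self-contained: you do not use Lemma~\ref{lemma1} at all, relying instead on the additivity $b(G)=b(G_1)+b(G_2)$ of the bipartite edge frustration across a cut vertex and a direct case analysis. The paper's approach is shorter given the tools it already has; yours has the merit of being independent of that external lemma, and the additivity observation is a clean structural fact in its own right.

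One minor gap: nonseparable means connected \emph{and} without separating vertices, but you only argue against the existence of a separating vertex. You should also rule out $G$ being disconnected. The same argument handles it verbatim (additivity of $b$ over disjoint components is trivial, and your two cases go through unchanged), so a single sentence suffices.
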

\begin{proof} We claim that if $G$ is $(3,2)$-critical, then every edge of $G$ is contained in at least one odd cycle. Suppose $e\in E(G)$ and $e$ is not contained in any odd cycle. By the definition of $(3,2)$-critical graph, there exists at least one edge $f\in E(G)\setminus \{e\}$ such that $\chi(G-\{e,f\})=2$. Since $e$ is not contained in any odd cycle, we have $\chi(G-f)=2$, contradicting the fact that $G$ is $(3,2)$-critical.

Let $G$ be a $(3,2)$-critical graph that contains at least three odd cycles (without isolated vertices). Suppose $G$ is not a connected graph or $G$ contains a separating vertex $v$. Let $G=G_1\cup G_2$ with $G_1\cap G_2=\emptyset$ or $\{v\}$, and there is at least one edge in $G_i$ ($i\in[2]$). By Lemma~\ref{lemma1}, one of $G_1$ and $G_2$ contains all odd cycles. Thus there exists at least one edge that is not contained in any odd cycle, a contradiction. Hence, $G$ is nonseparable.
\end{proof}

{ For an edge $e_i$, let $\mathcal{F}_i=\{f_i\in E(G)\mid \chi(G-\{e_i,f_i\})=2\}$}.
\begin {thm}\label{thm1}
Let $G$ be a $(3,2)$-critical graph with at least three odd cycles.  Suppose there are two odd cycles $D_1$ and $D_2$ in $G$ satisfying the following three conditions.
\begin{description}
\item[(1)] The intersection of $D_1$ and $D_2$ is a nontrivial path;
\item[(2)] {  There are two edges $e_1$ and $e_2$ in $G$ such that }$e_1\in E(D_1)\backslash E(D_2)$ and $e_2\notin E(D_2)$;
\item[(3)] $\mathcal{F}_1\subseteq E(D_1)\cap E(D_2)$ and $\mathcal{F}_{2}\cap (E(D_1)\cap E(D_2))\neq \emptyset$.
\end{description}
Then $\mathcal{F}_1\subseteq \mathcal{F}_2$. In particular, if $e_2\in E(D_1)\backslash E(D_2)$ and $\mathcal{F}_2\subseteq E(D_1)\cap E(D_2)$, then $\mathcal{F}_1=\mathcal{F}_2$.
\end{thm}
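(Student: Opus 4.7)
The plan is to apply a parity argument on any $2$-coloring of $G-\{e_1,f\}$ to show that $G-f$ is already bipartite; this forces $f$ to lie in every odd cycle of $G$, which immediately yields $f\in\mathcal{F}_2$.

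To set up, let $P=D_1\cap D_2$ with endpoints $u$ and $v$, and let $P_1=D_1-E(P)$ and $P_2=D_2-E(P)$, so that $D_1\cup D_2$ is a theta graph formed by three internally disjoint $(u,v)$-paths $P,P_1,P_2$. Write $p,q,r$ for their edge-lengths. The hypothesis $e_1\in E(D_1)\setminus E(D_2)$ puts $e_1\in E(P_1)$, while $\mathcal{F}_1\subseteq E(D_1)\cap E(D_2)=E(P)$ puts any $f\in\mathcal{F}_1$ in $E(P)$. Since $D_1$ and $D_2$ are odd cycles, both $p+q$ and $p+r$ are odd, so $q\equiv r\pmod{2}$ while $p$ has the opposite parity.

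Fix such an $f$ and a proper $2$-coloring $c$ of $G-\{e_1,f\}$, guaranteed to exist by $f\in\mathcal{F}_1$. Since $c$ is proper on every edge of $G$ except possibly $e_1$ and $f$, the only edges that can be monochromatic under $c$ are $e_1\in E(P_1)$ and $f\in E(P)$. Walking from $u$ to $v$ along each of the three paths and counting color flips, one obtains
\[
c(u)\oplus c(v)\equiv r \equiv q-\alpha \equiv p-\beta \pmod{2},
\]
where $\alpha,\beta\in\{0,1\}$ indicate whether $e_1$ and $f$ are respectively monochromatic in $c$. Combining with $q\equiv r$ and $p\not\equiv r\pmod{2}$ forces $\alpha=0$ and $\beta=1$, i.e.\ $e_1$ is properly colored and $f$ is monochromatic under $c$. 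Hence $c$ is already a proper $2$-coloring of $G-f$, so $G-f$ is bipartite and every odd cycle of $G$ contains $f$. Consequently $\{e_2,f\}$ covers every odd cycle of $G$, giving $\chi(G-\{e_2,f\})=2$ and $f\in\mathcal{F}_2$; this proves $\mathcal{F}_1\subseteq\mathcal{F}_2$.

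For the \emph{in particular} clause, the extra hypotheses $e_2\in E(D_1)\setminus E(D_2)$ and $\mathcal{F}_2\subseteq E(P)$ render the situation symmetric in $(e_1,\mathcal{F}_1)$ and $(e_2,\mathcal{F}_2)$---using that $\mathcal{F}_1\subseteq E(P)$ is nonempty by $(3,2)$-criticality to verify condition (3) with the roles swapped---so applying the same argument with the roles exchanged gives $\mathcal{F}_2\subseteq\mathcal{F}_1$, hence $\mathcal{F}_1=\mathcal{F}_2$. The main delicate point will be the parity bookkeeping: one has to attribute each potential monochromatic edge to the correct path in the theta graph so that the odd-cycle conditions on $D_1$ and $D_2$ jointly pin down both $\alpha$ and $\beta$; once that is set up, the conclusion follows mechanically.
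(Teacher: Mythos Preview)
Your parity argument is correct and considerably slicker than the paper's proof. The paper argues by contradiction: assuming some $f_1\in\mathcal{F}_1\setminus\mathcal{F}_2$, it produces an odd cycle $C$ avoiding $e_2,f_1$ but containing $e_1$ and some $f_2\in\mathcal{F}_2\cap E(P)$, and then runs a case analysis on how the $(x_1,w)$-subpath of $C$ threads through the theta graph $D_1\cup D_2$, in each case exhibiting an odd cycle that violates $\chi(G-\{e_i,f_i\})=2$ for some $i$. Your approach sidesteps all of this structure: the three-path parity count on $D_1\cup D_2$ directly forces $\alpha=0$ in any $2$-coloring of $G-\{e_1,f\}$, so $G-f$ itself is bipartite, and $f\in\mathcal{F}_2$ follows immediately.

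Two things worth noting. First, your argument uses strictly fewer hypotheses than the statement: you never touch the conditions on $e_2$ or the requirement $\mathcal{F}_2\cap E(P)\neq\emptyset$; only $e_1\in E(D_1)\setminus E(D_2)$ and $\mathcal{F}_1\subseteq E(P)$ are needed. Second --- and you stop just short of saying this --- your conclusion that $G-f$ is bipartite actually contradicts $es_\chi(G)=2$. So what you have really shown is that the hypotheses of the theorem are inconsistent: no $(3,2)$-critical $G$ can admit such $D_1,D_2,e_1$ with $\mathcal{F}_1\subseteq E(D_1)\cap E(D_2)$. This is compatible with how the paper uses the theorem (every application is inside a reductio), and in fact your observation would shortcut those applications: once one verifies $\mathcal{F}_1\subseteq E(P)$ for some theta subgraph, the contradiction is immediate without ever invoking $\mathcal{F}_1\subseteq\mathcal{F}_2$.
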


\begin{figure}[htbp]
\begin{center}
  \includegraphics[scale=0.43]{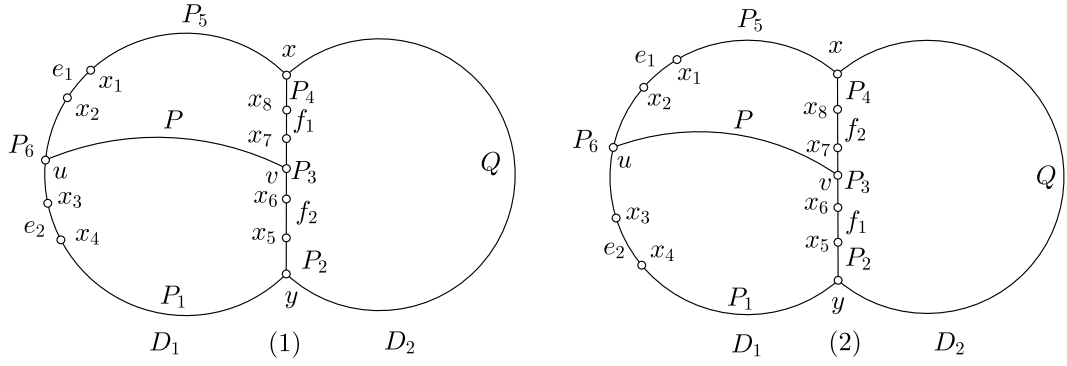}
\caption{ Subgraphs of $G$.}\label{fig6}
\end{center}
\end{figure}

\noindent{\bf Proof.}
Since $G$ is $(3,2)$-critical, $\mathcal{F}_1$ and $\mathcal{F}_2$ are non-empty. Suppose that there are two odd cycles $D_1$ and $D_2$ in $G$ satisfying the above three conditions, but $\mathcal{F}_1\nsubseteq\mathcal{F}_2$. Then there exists an edge $f_1\in \mathcal{F}_1\setminus \mathcal{F}_2$ such that $\chi(G-\{e_2,f_1\})=3$. This implies that there exists at least one odd cycle $C$  which is distinct from $D_1$ and $D_2$ in $G$ such that $e_2,f_1\notin E(C)$, since $G$ is a $(3,2)$-critical graph with at least three odd cycles.  Moreover, we have $(\{e_1\}\cup \mathcal{F}_2)\subseteq E(C)$ since $e_2,f_1\notin E(C)$. Next we show that this will lead to a contradiction.

Denote by $x$ and $y$ the two endpoints of the path which is the intersection of $D_1$ and $D_2$.
Suppose $e_1=x_1x_2$ and $e_2=ab$. {  Since $\mathcal{F}_{2}\cap (E(D_1)\cap E(D_2))\neq \emptyset$}, let $f_2\in \mathcal{F}_{2}\cap (E(D_1)\cap E(D_2))$. Since $\chi(G-\{e_2,f_1\})=3$, we have $f_2\neq f_1$. Let $x_5,x_6,x_7,x_8$
be the endpoints of $f_1$ and $f_2$ of which assignment depends on their position in $D_1\cap D_2$ (see Figure~\ref{fig6}, where each of the two cases has a separate figure). Let $P_2,P_3,P_4,P_5$ be the $(y,x_5)$-path, $(x_6,x_7)$-path, $(x_8,x)$-path, $(x,x_1)$-path of $D_1$ and $Q$ be the $(y,x)$-path of $D_2$ in a counter clockwise direction, respectively, as shown in Figure~\ref{fig6}. If $e_2\in E(D_1)$, then let $e_2=ab=x_3x_4$ and $P_1,P_6$ be the $(x_4,y)$-path, $(x_2,x_3)$-path of $D_1$ in a counter clockwise direction, respectively.
If $e_2\notin E(D_1)$, then let $P_6=P_1$ be the $(x_2,y)$-path of $D_1$ in a counter clockwise direction and $x_2=x_3=x_4$.
We first prove the following claim.\medskip

\noindent  {\bf Claim}: Let  $u\in (V(D_1)\cup V(D_2))\setminus V(P_3)$ and $v\in V(P_3)$.  Then there is no $(u,v)$-path $P$ such that  $V(P)\cap (V(D_1)\cup V(D_2))=\{u,v\}$, where $P\neq f_1, f_2$.\medskip

\begin{proof} Suppose there is a $(u,v)$-path $P$ such that  $V(P)\cap (V(D_1)\cup V(D_2))=\{u,v\}$ and $P\neq f_1, f_2$. It suffices to consider the two structures as shown in Figure~\ref{fig6}. The difference between two graphs in Figure~\ref{fig6} is the position relation of the three edge $e_1$, $f_1$ and $f_2$. In the following, we will consider the two structures simultaneously. Let $M_1$ and $M_2$ be the $(x_6,v)$-path and $(v,x_7)$-path of $D_1$ in a counter clockwise direction, respectively.

%
First, suppose $u\in V(P_1)$. Let $M_3$ and $M_4$ be the $(x_4,u)$-path and $(u,y)$-path of $D_1$ in a counter clockwise direction, respectively. { If $|E(P\cup M_4)|$ and $|E(M_1\cup x_5x_6\cup P_2)|$ have different parity, then $P\cup M_4\cup P_2\cup x_5x_6\cup M_1$ is an odd cycle. Otherwise} $P\cup M_4\cup Q\cup P_4\cup x_8x_7\cup M_2$ is an odd cycle since $D_2$ is an odd cycle. If $P\cup M_4\cup P_2\cup x_5x_6\cup M_1$ is an odd cycle, then $\chi(G-\{e_1,f_1\})=3$ in Figure~\ref{fig6} (1)  and  $\chi(G-\{e_2,f_2\})=3$ in Figure~\ref{fig6} (2). If $P\cup M_4\cup Q\cup P_4\cup x_8x_7\cup M_2$ is an odd cycle, then $\chi(G-\{e_2,f_2\})=3$ in Figure~\ref{fig6} (1)  and  $\chi(G-\{e_1,f_1\})=3$ in Figure~\ref{fig6} (2). This contradicts $\chi(G-\{e_i,f_i\})=2$ for $i\in [2]$. Similarly, if $u\in V(P_5)$, then we also can get a contradiction.

{ Now suppose $u\in V(P_2)$. Let $M_5$ and $M_6$ be the $(y,u)$-path and $(u,x_5)$-path of $D_1$ in a counter clockwise direction, respectively. If $|E(P)|$ and $|E(M_1\cup x_6x_5\cup M_6)|$ have different parity, then $P\cup M_1\cup x_6x_5\cup M_6$ is an odd cycle. Otherwise  $M_5\cup P\cup M_2\cup x_7x_8\cup P_4\cup Q$ is an odd cycle since $D_2$ is an odd cycle. If $P\cup M_1\cup x_6x_5\cup M_6$ is an odd cycle, then $\chi(G-\{e_1,f_1\})=3$ in Figure~\ref{fig6} (1)  and  $\chi(G-\{e_2,f_2\})=3$ in Figure~\ref{fig6} (2). If $M_5\cup P\cup M_2\cup x_7x_8\cup P_4\cup Q$ is an odd cycle, then $\chi(G-\{e_2,f_2\})=3$ in Figure~\ref{fig6} (1)  and  $\chi(G-\{e_1,f_1\})=3$ in Figure~\ref{fig6} (2), a contradiction. Similarly, if $u\in V(P_4\cup Q)$, then we also can get a contradiction.}

Finally, we only need to consider the case of $e_2\in E(D_1)$ and $u\in V(P_6)$. In this case, let { $M_7$ and $M_8$} be the $(x_2,u)$-path and $(u,x_3)$-path of $D_1$ in a counter clockwise direction, respectively. In Figure~\ref{fig6} (1), since $D_1$ is an odd cycle, either $P\cup M_7\cup e_1\cup P_5\cup P_4\cup f_1\cup M_2$ or $P\cup M_8\cup e_2\cup P_1\cup P_2\cup f_2\cup M_1$ is an odd cycle. Then $\chi(G-\{e_2,f_2\})=3$ or $\chi(G-\{e_1,f_1\})=3$, a contradiction. In Figure~\ref{fig6} (2), { since $D_1$ and $D_2$ are odd cycles, we have either $|E(P\cup M_8\cup e_2\cup P_1)|$ and $|E(P_2\cup f_1\cup M_1)|$ have the same parity or $|E(P\cup M_7\cup e_1\cup P_5)|$ and $|E(P_4\cup f_2\cup M_2)|$ have the same parity. So either} $P\cup M_8\cup e_2\cup P_1\cup Q\cup P_4\cup f_2\cup M_2$ or $P\cup M_7\cup e_1\cup P_5 \cup Q\cup P_2\cup f_1\cup M_1$ is an odd cycle. Then $\chi(G-\{e_1,f_1\})=3$ or $\chi(G-\{e_2,f_2\})=3$, a contradiction. Hence the claim holds.\qed\\

Let $w=x_6$ if $f_2=x_5x_6$ and $w=x_7$ if $f_2=x_7x_8$. Since $(\{e_1\}\cup \mathcal{F}_2)\subseteq E(C)$, {  we have $e_1,f_2\in E(C)$.} Let $P_0$ be the $(x_1,w)$-path contained in $C$ with $f_2\notin E(P_0)$. Let $P\subseteq P_0$ be the $(u,v)$-path where $u\in \{V(P_0)\cap (V(D_1)\cup V(D_2)\}\setminus V(P_3)$ and $v\in V(P_0)\cap V(P_3)$ such that $d_{P_0}(u,v)$ is as small as possible.
Since $f_1\notin E(C)$ and $f_2\notin E(P_0)$, we have $P\neq f_1, f_2$. By the choice of $u$ and $v$, we know that $V(P)\cap (V(D_1)\cup V(D_2))=\{u,v\}$. So there is a $(u,v)$-path $P$ such that $V(P)\cap (V(D_1)\cup V(D_2))=\{u,v\}$ and $P\neq f_1, f_2$, where $u\in (V(D_1)\cup V(D_2))\setminus V(P_3)$ and $v\in V(P_3)$. By {\bf Claim}, we get a contradiction. Hence $\mathcal{F}_1\subseteq\mathcal{F}_2$.

In particular, if $e_2\in E(D_1)$ and $\mathcal{F}_2\subseteq (E(D_1)\cap E(D_2)$, then we have $\mathcal{F}_1=\mathcal{F}_2$ by the symmetry of $e_1$ and $e_2$. This completes the proof of Theorem \ref{thm1}.
\end{proof}

\begin{thm}\label{mthm}
Let $G$ be a $(3,2)$-critical graph with at least five odd cycles and $H\in \mathcal{G}$ with $H\subseteq G$. Then (i) $H\notin \mathcal{G}\setminus\{\mathcal{G}_4\cup \mathcal{G}_5\}$, and (ii) $H\in \mathcal{G}_4\cup \mathcal{G}_5$ (see Figure~\ref{fig15}) satisfying that $P_2\cup P_3$ is an  even cycle in $H$.
\end{thm}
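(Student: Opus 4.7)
The plan is to invoke Proposition~\ref{prop} to obtain a subgraph $H\in\mathcal{G}$ of $G$ and then to handle the seven families $\mathcal{G}_1,\ldots,\mathcal{G}_7$ one at a time, using Theorem~\ref{thm1} as the main engine throughout. For each candidate $H\in\mathcal{G}_i$ I would pick two edges $e_1, e_2$ in the theta-subgraph $Q_1\cup Q_2\cup Q_3$ satisfying $e_1\in E(D_1)\setminus E(D_2)$ and $e_2\notin E(D_2)$, verify hypothesis~(3) of Theorem~\ref{thm1} (namely $\mathcal{F}_1\subseteq E(D_1)\cap E(D_2)=E(Q_2)$ and $\mathcal{F}_2\cap E(Q_2)\neq\emptyset$), and conclude $\mathcal{F}_1\subseteq\mathcal{F}_2$. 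Using the hypothesis that $G$ has at least five odd cycles, I would then exhibit an odd cycle avoiding $e_1$ and every edge of $\mathcal{F}_1$, contradicting the $(3,2)$-criticality of $G$.

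Two preliminary facts apply to every family. First, the third theta cycle $Q_1\cup Q_3$ is even, since $|Q_1|+|Q_3|=|D_1|+|D_2|-2|Q_2|$ is even. Second, Lemma~\ref{conn} says $G$ is nonseparable, so Proposition~\ref{ear} lets me attach any additional structure to $H$ as an open ear, and Lemma~\ref{lemma1} forces every pair of odd cycles of $G$ to share more than one vertex, which pins down where further odd cycles of $G$ can possibly lie. Together these facts let me enumerate the odd cycles of $H$ (namely $D_1, D_2$ plus whichever ear-generated cycle is odd) and describe the possible shape of the fifth odd cycle needed for the contradiction.

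For each $i\in\{1,2,3,6,7\}$ I would pick $e_1\in E(Q_1)$ and choose $e_2$ in a suitable location (either in $E(Q_1)$ or on the dotted ear) depending on where the ear attaches in the figure. The verification of hypothesis~(3) is a parity computation: for every candidate $f'\notin E(Q_2)$ --- whether $f'$ lies in $E(Q_1)\setminus\{e_1\}$, in $E(Q_3)$, on the dotted ear, or entirely outside $H$ --- I would exhibit an odd cycle of $G$ avoiding both $e_1$ and $f'$, thereby excluding $f'$ from $\mathcal{F}_1$. In all five families the ear's attachment produces enough odd cycles for this, once combined with the extra odd cycles and open ears guaranteed by the five-odd-cycle hypothesis together with Proposition~\ref{ear}. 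Theorem~\ref{thm1} then yields $\mathcal{F}_1\subseteq\mathcal{F}_2$, and the fifth odd cycle provides the contradiction, establishing part~(i).

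For part~(ii), I would rerun essentially the same argument under the assumption that $P_2\cup P_3$ is an odd cycle of $H$; in that case $H$ already contains enough odd cycles to reproduce the contradiction of the previous paragraph, so $P_2\cup P_3$ must be even. The main obstacle lies in paragraph three: verifying hypothesis~(3) of Theorem~\ref{thm1} family-by-family requires, for every possible edge $f'\notin E(Q_2)$, producing an odd-cycle witness avoiding both $e_1$ and $f'$, and the correct witness depends on where the endpoints of the dotted ear sit on the theta, so the parity bookkeeping has to be repeated in a slightly different way for each of the five excluded figures.
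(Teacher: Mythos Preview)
Your high-level plan (case split on $\mathcal{G}_i$, invoke Theorem~\ref{thm1}) matches the paper, but the final contradiction you describe does not work, and you are missing the actual mechanism the paper uses.

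The sentence ``exhibit an odd cycle avoiding $e_1$ and every edge of $\mathcal{F}_1$, contradicting the $(3,2)$-criticality of $G$'' is self-defeating: if $f\in\mathcal{F}_1$ then $\chi(G-\{e_1,f\})=2$, so \emph{no} odd cycle can avoid both $e_1$ and $f$. You cannot exhibit such a cycle, and the five-odd-cycle hypothesis will not produce one for you. Moreover, once you have established $\mathcal{F}_1\subseteq E(Q_2)$ and $\mathcal{F}_1\subseteq\mathcal{F}_2$, nothing further follows from just this single inclusion; Theorem~\ref{thm1} as you apply it (with one pair $e_1,e_2$) gives you an inclusion that you then never use.

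What the paper actually does is different in the two blocks of cases. For $\mathcal{G}_1,\mathcal{G}_2,\mathcal{G}_3$ it first proves (via the elementary observation that in $G-e$ all odd cycles share an edge) that the ear-cycle $P_1\cup P_2$ is \emph{even}, and then applies Theorem~\ref{thm1} not to one pair but to \emph{every} pair $e_1,e_2\in E(P_1)\cup E(P_2)$, obtaining a common $\mathcal{F}$ for all of them. The contradiction is then: for $f\in\mathcal{F}$, every odd cycle of $G-f$ must contain every edge of $P_1\cup P_2$, hence equals $P_1\cup P_2$, which is even. For $\mathcal{G}_6,\mathcal{G}_7$ the paper chooses \emph{three} edges $e_1,e_2,e_3$ incident to a common vertex $v$, applies Theorem~\ref{thm1} twice to get $\mathcal{F}_1\subseteq\mathcal{F}_2$ and $\mathcal{F}_1\subseteq\mathcal{F}_3$, and concludes that any odd cycle in $G-f$ (for $f\in\mathcal{F}_1$) contains all three edges, forcing $v$ to have degree~$3$ in a cycle. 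Your proposal uses only two edges and so cannot reach either of these contradictions. For part~(ii), the paper simply observes that if $P_2\cup P_3$ is odd then $H\in\mathcal{G}_4\cup\mathcal{G}_5$ is literally a member of $\mathcal{G}_1\cup\mathcal{G}_2$, reducing to the already-excluded case; your plan to ``rerun essentially the same argument'' is unnecessary.
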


 \begin{proof}
By Proposition~\ref{prop}, there exists an $H\in\mathcal{G}$ such that $H\subseteq G$. { Let $D_1$ and $D_2$ be as stated when we introduce the definition  of $\mathcal{G}_i$ for $i\in [7]$. Clearly we have known that $D_1$ and $D_2$ are odd cycles.}
By the definition of $(3,2)$-critical, the following observation holds directly.\medskip

\noindent  {\bf Observation}: If $G$ is $(3,2)$-critical, then for any $e\in E(G)$, all odd cycles share one edge in $G-e$.

It suffices to prove the following three claims.

{\bf Claim 1.}  $H\notin\mathcal{G}_1\cup\mathcal{G}_2\cup\mathcal{G}_3$.

{\bf Proof.} Suppose $H\in\mathcal{G}_1\cup\mathcal{G}_2\cup\mathcal{G}_3$. Let $P=P_5\cup P_6$, $P=P_5$ and $P=\emptyset$ if $H\in\mathcal{G}_1$, $H\in\mathcal{G}_2$ and $H\in\mathcal{G}_3$, respectively. Then $D_1=P_1\cup P_3\cup P$ and $D_2=P_3\cup P_4$ are odd cycles.

We first claim $P_1\cup P_2$ is an even cycle. Suppose $H\in\mathcal{G}_1$ or $H\in\mathcal{G}_2$. Let $e\in E(P_5)$. By {\bf Observation}, all odd cycles share one edge in $G-e$. Since there is no edge in $(P_1\cup P_2)\cap D_2$, $P_1\cup P_2$ is an even cycle.   Suppose $H\in\mathcal{G}_3$. If $P_1\cup P_2$ is an odd cycle, then $H$ contains exactly four odd cycles { $D_1,D_2,P_1\cup P_2$ and $P_2\cup P_4$}. Since $G$ contains at least five odd cycles, there exists an edge $e\in E(G)\setminus E(H)$. By {\bf Observation}, all odd cycles share one edge in $G-e$, contradicting the fact that $E(P_1\cup P_2)\cap E(D_2)=\emptyset$. So $P_1\cup P_2$ is an even cycle. This means that $D'_1=P_2\cup P_3\cup P$ is  an odd cycle since $D_1$ is an odd cycle. For any  $e_1,e_2\in E(P_1)\cup E(P_2)$, we have $\emptyset\neq\mathcal{F}_1,\mathcal{F}_2\subseteq E(P_3)$. { Without loss of generality, suppose that $e_1,e_2\in E(P_1)$, or $e_1\in E(P_1)$ and $e_2\in E(P_2)$. If $e_1,e_2\in E(P_1)$, then $D_1,D_2$ and $e_1,e_2$ satisfy the conditions in Theorem~\ref{thm1} since $D_1\cap D_2=P_3$. By the symmetry of $e_1$ and $e_2$, we have $\mathcal{F}_1=\mathcal{F}_2\neq \emptyset$ by Theorem~\ref{thm1}. If $e_1\in E(P_1)$ and $e_2\in E(P_2)$, we have $D_1,D_2,e_1,e_2$ and $D_1',D_2,e_1,e_2$ satisfy the conditions in Theorem~\ref{thm1} since $D_1\cap D_2=D_1'\cap D_2=P_3$. In this case again we have $\mathcal{F}_1=\mathcal{F}_2\neq \emptyset$ by Theorem~\ref{thm1}. Therefore, for any $e_1,e_2\in E(P_1\cup P_2)$, $\mathcal{F}_1=\mathcal{F}_2$.} For any edge $f\in \mathcal{F}_1=\mathcal{F}_2$ and $e\in E(P_1)\cup E(P_2)$, we have $\chi(G-f)=3$ and $\chi(G-\{e,f\})=2$ {  as $G$ is $(3,2)$-critical}. So there is at least one odd cycle $C$ in $G-f$ such that $e\in E(C)$. By the arbitrariness of $e$, we have $P_1\cup P_2\subseteq C$. Hence $C=P_1\cup P_2$  is an odd cycle, contradicting the fact that $P_1\cup P_2$ is an even cycle.\qed

{\bf Claim 2.}  $H\notin\mathcal{G}_6\cup\mathcal{G}_7$.

{\bf Proof.} Suppose $H\in\mathcal{G}_6$. Let $P_1,P_2,P_3$ be the $(x,u)$-path, $(u,y)$-path, $(y,x)$-path of $D_1$ and $P_4,P_5$ be the $(y,v)$-path, $(v,x)$-path of $D_2$ in a counter clockwise direction, respectively,  as shown in Figure~\ref{fig15}. Then $D_1=P_1\cup P_2\cup P_3$ and $D_2=P_4\cup P_5\cup P_3$. Let $P_6$ denote the $(u,v)$-path that is internally disjoint from $D_1\cup D_2$. Since both $D_1$ and $D_2$ are odd cycles, {  $P_1\cup P_5\cup P_4\cup P_2$ is an even cycle. So }the two cycles  $D_3=P_6\cup P_2\cup P_4$ and $D_4=P_6\cup P_5\cup P_1$ have the same parity. If $D_3$ and $D_4$ are  odd cycles, then $H$ contains exactly four odd cycles. Since $G$ contains at least five odd cycles, there exists an edge $e\in E(G)\setminus E(H)$. By {\bf Observation}, all odd cycles share one edge in $G-e$, contradicting the fact that $E(D_1)\cap E(D_2) \cap E(D_3) \cap E(D_4)=\emptyset$. So $D_3$ and $D_4$ are  even cycles. Then $D_5=P_5\cup P_6\cup P_2\cup P_3$ and $D_6=P_1\cup P_6\cup P_4\cup P_3$ are odd cycles. For $e_1=w_1v\in E(P_6)$, $e_2=w_2v\in E(P_5)$ and $e_3=w_3v\in E(P_4)$, let $\mathcal{F}_i=\{f_i\in E(G)\mid \chi(G-\{e_i,f_i\})=2\}$ for $i\in[3]$. Then we have  $\emptyset\neq\mathcal{F}_1\subseteq E(P_3)$,  $\emptyset\neq\mathcal{F}_2\subseteq E(P_1\cup P_3)$ and $\emptyset\neq\mathcal{F}_3\subseteq E(P_2\cup P_3)$. Note that $D_1\cap D_6=P_1\cup P_3$ with $e_1\in E(D_6)\setminus E(D_1)$ and $e_2\notin E(D_1)\cup E(D_6)$, and $D_1\cap D_5=P_2\cup P_3$ with $e_1\in E(D_5)\setminus E(D_1)$ and $e_3\notin E(D_1)\cup E(D_5)$. { We have that $D_6,D_1, e_1,e_2$ and $D_5,D_1, e_1,e_3$ satisfy the conditions in Theorem~\ref{thm1}. Therefore}
by Theorem~\ref{thm1}, we have $\emptyset\neq \mathcal{F}_1\subseteq \mathcal{F}_2$ and $\emptyset\neq \mathcal{F}_1\subseteq \mathcal{F}_3$. For any edge $f\in \mathcal{F}_1$ and $e_i$ ($i\in[3]$), we have $\chi(G-f)=3$ and $\chi(G-\{e_i,f\})=2$. So there is at least one odd cycle $C$ in $G-f$ such that $e_1,e_2,e_3\in E(C)$. Then the degree of $v$ in $C$ is three, a contradiction.

Suppose $H\in\mathcal{G}_7$. Let $P_1,P_2,P_4,P_5$ be the $(x,u)$-path, $(u,y)$-path, $(y,v)$-path, $(v,x)$-path of $D_1$ and $P_6$ be the $(y,x)$-path of $D_2$ in a counter clockwise direction, respectively, as shown in Figure~\ref{fig15}. Then  $D_1=P_1\cup P_2\cup P_4\cup P_5$ and $D_2=P_4\cup P_5\cup P_6$. Let $P_3$ denote the $(u,v)$-path that is internally disjoint from $D_1\cup D_2$. Since both $D_1$  and $D_2$  are odd cycles,
we have $P_1\cup P_2\cup P_6$ is an even cycle and either $P_1\cup P_3\cup P_5$ or $P_2\cup P_3\cup P_4$ is an odd cycle. Without loss of generality, we assume $D_3=P_2\cup P_3\cup P_4$ is an odd cycle. Then $D_4=P_3\cup P_1\cup P_6\cup P_4$ is an odd cycle. For $e_1=v_1x\in E(P_1)$,  $e_2=v_2x\in E(P_5)$ and  $e_3=v_3x\in E(P_6)$, let $\mathcal{F}_i=\{f_i\in E(G)\mid \chi(G-\{e_i,f_i\})=2\}$ for $i\in[3]$. Then we have $\emptyset\neq\mathcal{F}_1\subseteq E(P_4)$, $\emptyset\neq\mathcal{F}_2\subseteq E(P_3\cup P_4)$, and $\emptyset\neq\mathcal{F}_3\subseteq E(P_2\cup P_4)$. Note that $D_3\cap D_4=P_3\cup P_4$ with $e_1\in E(D_4)\setminus E(D_3)$ and $e_2\notin E(D_3)\cup E(D_4)$, and $D_1\cap D_3=P_4\cup P_2$ with $e_1\in E(D_1)\setminus E(D_3)$ and $e_3\notin E(D_1)\cup E(D_3)$. { We have that $D_3,D_4,e_1,e_2$ and $D_3,D_1,e_1,e_3$ satisfy the conditions in Theorem~\ref{thm1}. Therefore,} we have $\emptyset\neq \mathcal{F}_1\subseteq \mathcal{F}_2$ and $\emptyset\neq \mathcal{F}_1\subseteq \mathcal{F}_3$ by Theorem~\ref{thm1}. For any edge $f\in \mathcal{F}_1$ and $e_i$ ($i\in[3]$), we have $\chi(G-f)=3$ and $\chi(G-\{e_i,f\})=2$. So there is at least one odd cycle $C$ in $G-f$ such that $e_1,e_2,e_3\in E(C)$. Then the degree of $x$ in $C$ is three, a contradiction.\qed

%
{\bf Claim 3.}  $H\in\mathcal{G}_4\cup\mathcal{G}_5$ {  and $P_2\cup P_3$ is an even cycle in $H$}.

{\bf Proof.}  Suppose $H\in\mathcal{G}_4\cup\mathcal{G}_5$.
Let $D_3=P_2\cup P_3$. If $D_3$ is an odd cycle, then  $\mathcal{G}_4=\mathcal{G}_2$ and $\mathcal{G}_5=\mathcal{G}_1$. By {\bf Claim 1}, we know $G$ contains no graph from $\mathcal{G}_1\cup\mathcal{G}_2$ as a subgraph. Therefore if $H\in\mathcal{G}_4\cup\mathcal{G}_5$, then $D_3=P_2\cup P_3$ is an even cycle.\qed

The proof is thus complete.
\end{proof}
\section{Proof of Theorem~\ref{mainthm}}

Let $\{H_i\mid i\in[k]\}$ $(k\geq3)$ be a family of graphs satisfying the following three conditions: (1) $H_i$ is an even cycle or  a path for any $i\in[k]$; (2) there are at least two  even cycles and at least one path; (3) for $i\in[k]$, if $H_i$ is a path, then $H_{i-1}$ and $H_{i+1}$ are not paths, where the subscripts are taken cyclically modulo $k$. We define the family $\mathcal{E}'$, which is obtained from the disjoint union of $k$ graphs $H_1, H_2,\ldots, H_k$ as follows. For each $i\in[k]$, let $x_i$ and $y_i$ be any two distinct vertices of $H_i$ if $H_i$ is an even cycle, and be the two endpoints of $H_i$ if $H_i$  is a path, where we only require that $\sum_{i=1}^kd_{H_i}(x_i,y_i)$ is odd. A graph $H\in \mathcal{E}'$ is obtained by identifying $y_i$ and $x_{i+1}$ for $i\in[k-1]$, and identifying $y_k$ and $x_1$. { Similarly, denote the even cycle $C_{2n_i}$ in a graph $F\in \mathcal{E}$ by $H_i$ for $i\in [k]$. We have the following lemma.}
\begin{lem}\label{mainlem}
If we add an open ear $P$ with endpoints $u$ and $v$ to $F\in\mathcal{E}'\cup \mathcal{E} $, then $F+P$ contains a graph from $\mathcal{G}_1\cup\mathcal{G}_2\cup\mathcal{G}_3\cup\mathcal{G}_6\cup\mathcal{G}_7$ as a subgraph except when
$u$ and $v$ belong to the same $H_i$ {  which is a path} and the new cycle generated by $H_i\cup P$ is an even cycle.

\end{lem}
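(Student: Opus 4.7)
The plan is a case analysis on the positions of the endpoints $u, v$ of the ear $P$ relative to the cyclic decomposition $F = H_1 \cup \cdots \cup H_k$, with identified cut-like vertices $z_i := y_i = x_{i+1}$ (indices mod $k$). The key tool, built into the definitions of $\mathcal{E}$ and $\mathcal{E}'$, is that $F$ admits many ``big'' cycles going once around the chain, and since every even-cycle $H_\ell$ offers two $(x_\ell, y_\ell)$-paths of opposite parities, the parity of such a big cycle can be toggled by swapping the arc used in any single even-cycle $H_\ell$. The condition $\sum_i d_{H_i}(x_i, y_i)$ odd guarantees at least one odd big cycle, while the hypothesis that at least two $H_i$ are even cycles (and, in the $\mathcal{E}'$ case, that no two consecutive $H_i$ are paths) ensures an extra parity flip is always available where needed.

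The first case is $u, v \in H_i$ for some $i$. If $H_i$ is an even cycle, let $A, B$ be the two $(u,v)$-arcs in $H_i$; since $|A| + |B|$ is even, $A \cup P$ and $B \cup P$ have the same parity. Combining the theta $A \cup B \cup P$ with a big odd cycle of $F$ that enters $H_i$ along one of its $(x_i, y_i)$-arcs (chosen internally disjoint from the arcs of $H_i$ used by $P$) produces two odd cycles sharing a path, and depending on the mutual position of $\{u,v\}$ and $\{x_i, y_i\}$ on $H_i$, the configuration matches $\mathcal{G}_3$, $\mathcal{G}_2$, or $\mathcal{G}_1$. If $H_i$ is a path, then there is a unique $(u,v)$-subpath $A \subset H_i$ giving a single cycle $A \cup P$; if it is odd, one argues as above, while if it is even we recover exactly the exceptional configuration.

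The second case is $u \in H_i$, $v \in H_j$ with $i \neq j$. The cyclic chain yields two internally disjoint $(u,v)$-paths $L_1, L_2$ in $F$ (going each way around the chain), so $L_1 \cup P$ and $L_2 \cup P$ are cycles through $P$. By swapping an arc in a conveniently chosen even $H_\ell$ on one side, we produce a variant $(u,v)$-path $L_1'$ differing from $L_1$ along a single $H_\ell$-arc; then $L_1 \cup P$ and $L_1' \cup P$ share a long portion, and the parity freedom above lets us arrange for both to be odd. Whether $u$ and $v$ coincide with cut vertices $z_s$ or are interior vertices of their $H$'s determines whether the resulting subgraph falls in $\mathcal{G}_6$ or in $\mathcal{G}_7$.

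The hardest step will be the parity bookkeeping needed to guarantee, in every non-exceptional subcase, that the constructed pair of odd cycles shares a path \emph{in the correct pattern}, so that the resulting subgraph lies in $\mathcal{G}_1 \cup \mathcal{G}_2 \cup \mathcal{G}_3 \cup \mathcal{G}_6 \cup \mathcal{G}_7$ rather than in $\mathcal{G}_4 \cup \mathcal{G}_5$ (the latter would only reproduce the structure already present in $F$ and not advance the argument). This is precisely where the hypotheses ``at least two even cycles'' and ``no two consecutive paths'' are used essentially: they guarantee that on whichever side of the chain we need to toggle parity, there is always an even $H_\ell$ available to provide the flip.
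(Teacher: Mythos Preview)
Your plan contains a fundamental parity error in the first paragraph: you assert that each even cycle $H_\ell$ offers two $(x_\ell,y_\ell)$-paths of \emph{opposite} parities. In fact, since $|H_\ell|$ is even the two arcs have the \emph{same} parity, so swapping an arc never toggles the parity of a big cycle (indeed, every big cycle of $F$ is odd). This breaks your Case~2: if $L_1\cup P$ is even then $L_1'\cup P$ is also even, and no amount of ``parity freedom'' from arc-swapping will make both odd. Your Case~1 for an even cycle $H_i$ is also incomplete: when $u$ and $v$ lie on \emph{different} $(x_i,y_i)$-arcs of $H_i$, the configuration is $\mathcal{G}_6$, which you do not list.

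The paper's proof uses the same-parity observation the other way around. It takes two big cycles $D_1,D_2$ of $F$ that differ by a single arc-swap in one even $H_\ell$; both are automatically odd, and they share a long path. The new ear $P$ then serves as the \emph{dotted} path. In Case~1 this gives $\mathcal{G}_1\cup\mathcal{G}_2\cup\mathcal{G}_3$ if $u,v$ lie on the same arc and $\mathcal{G}_6$ otherwise. In Case~2 (including the mixed cycle/path and path/path subcases you do not separate out), the generic outcome is $\mathcal{G}_7$: the ear $P$ runs from the unshared part of $D_1$ into $D_1\cap D_2$. Only when $u,v$ are junction vertices $z_s$ does one need a different construction; there the paper splits on the parity of $P\cup L_1$, uses $P$ inside one of the odd cycles, and takes a leftover arc of some other even $H_s$ as the dotted path, landing in $\mathcal{G}_1\cup\mathcal{G}_2\cup\mathcal{G}_3$ rather than in $\mathcal{G}_6\cup\mathcal{G}_7$ as you predict.
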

\begin{proof}
Let $F\in\mathcal{E}'\cup \mathcal{E} $ and $P$ be an open ear of $F$ with endpoints $u$ and $v$.

 {\bf Case 1.} $u,v \in V(H_i)$ and $H_i$  is an even cycle.

{  Denote by $P_1$ and $P_2$ the two internally disjoint $(x_i,y_i)$-paths in $H_i$. By the construction of $F$, there exists an $(x_i,y_i)$-path $P_3$ in $F$ which is internally disjoint with $P_1$ and $P_2$ such that $D_1=P_1\cup P_3$ and $D_2=P_2\cup P_3$ are odd cycles. If $u,v\in V(P_1)$, then $D_1\cup D_2\cup P\in \mathcal{G}_1 \cup \mathcal{G}_2 \cup \mathcal{G}_3$. If $u\in V(P_1)$ and $v\in V(P_2)$, then $D_1\cup D_2\cup P\in \mathcal{G}_6$.} Therefore, $D_1\cup D_2\cup P\in \mathcal{G}_1 \cup \mathcal{G}_2 \cup \mathcal{G}_3\cup \mathcal{G}_6$.

{\bf Case 2.} $u\in V(H_i)$ and $v\in V(H_j)$ ($i<j$), where $H_i$ and $H_j$ are even cycles.

If $u$ or $v\in V(H_i)\cap V(H_j)$, then it can be reduced to {\bf Case 1}. So we assume $u,v\notin V(H_i)\cap V(H_j)$. {  Denote by $P_1$ and $P_2$ the two internally disjoint $(x_i,y_i)$-paths in $H_i$, $P_3$ and $P_4$ the two internally disjoint $(x_j,y_j)$-paths in $H_j$. 
By the construction of $F$, there exist $(y_i,x_j)$-path $P_5$ and $(y_j,x_i)$-path $P_6$ such that (i) $P_s$ and $P_t$ are internally disjoint for $s\neq t$ and $s,t\in [6]$, (ii) $D_1=P_1\cup P_5\cup P_3\cup P_6$ and $D_2=P_2\cup P_5\cup P_3\cup P_6$ are odd cycles. Since $H_i$ and $H_j$ are even cycles, we have $P_1\cup P_5\cup P_4\cup P_6$ is also an odd cycle. Without loss of generality, let $u\in V(P_1)$ and $v\in V(P_3)$.}
Suppose $u\notin \{x_i, y_i\}$. Since $D_1=P_1\cup P_5\cup P_3\cup P_6$ and $D_2=P_2\cup P_5\cup P_3\cup P_6$ are odd cycles, $D_1\cup D_2\cup P\in \mathcal{G}_7$. Suppose $v\notin\{x_j, y_j\}$. Since  $D_1=P_1\cup P_5\cup P_3\cup P_6$ and  $D_2=P_1\cup P_4\cup P_5\cup P_6$ are odd cycles, $D_1\cup D_2\cup P\in \mathcal{G}_7$. Suppose $u\in\{x_i, y_i\}$ and $v\in \{x_j, y_j\}$. Since $k\geq3$, at least one of $P_5$ and $P_6$ is not an isolated vertex. Without loss of generality, we assume $P_5$ is not an isolate vertex. It suffices to consider the following two subcases.

{\bf Subcase 2.1.} $u=x_i$ and $v=x_j$.

{  Suppose that $y_j\neq x_i$, otherwise it can be reduced to {\bf Case 1}.} If $P\cup P_5\cup P_1$ is an odd cycle, then let  $D_1=P_1\cup P_5\cup P_3\cup P_6$ and $D_2=P\cup P_5\cup P_1$. Thus $D_1\cup D_2\cup P_4\in\mathcal{G}_2$. If $P\cup P_5\cup P_1$ is an even cycle, then $P\cup P_3\cup P_6$ is an odd cycle since $P_1\cup P_5\cup P_3\cup P_6$ is an odd cycle. Let $D_1=P_1\cup P_5\cup P_3\cup P_6$ and $D_2=P\cup P_3\cup P_6$. Thus, $D_1\cup D_2\cup P_2\in\mathcal{G}_2$.

{\bf Subcase 2.2.} $u=y_i$ and $v=x_j$.

For the case that $P_5$ is some $H_i$ of $F$ and $H_i$ is a path, we will consider it in {\bf Case 5}. So we assume there is an even cycle $H_s$ ($s\in[k]$ and $s\neq i,j$) of $F$ such that $H_s\cap P_5$ is a nontrivial path.  Let $P'$ be the $(u',v')$-path with $u',v'\in V(P_5)$, $P'\subseteq H_s$ and $P'\not\subseteq P_5$.
If $P\cup P_5$ is an odd cycle, then let $D_1=P_1\cup P_5\cup P_3\cup P_6$ and $D_2=P\cup P_5$. Thus $D_1\cup D_2\cup P_4\in\mathcal{G}_2$. If $P\cup P_5 $ is an even cycle, then let $D_1=P_1\cup P_5\cup P_3\cup P_6$ and $D_2=P_1\cup P\cup P_3\cup P_6$. {  If $\{u',v'\}\cap\{y_i,x_j\}=\emptyset$, then $D_1\cup D_2\cup P'\in \mathcal{G}_1$. If $|\{u',v'\}\cap\{y_i,x_j\}|=1$, then $D_1\cup D_2\cup P'\in \mathcal{G}_2$. Otherwise, $D_1\cup D_2\cup P'\in \mathcal{G}_3$.} Thus, $D_1\cup D_2\cup P'\in \mathcal{G}_1 \cup \mathcal{G}_2{ \cup\mathcal{G}_3}$.

{\bf Case 3.} $u\in V(H_i)$ and $v\in V(H_j)$ ($i<j$), where $H_i$ is an even cycle and $H_j$ is a path.

If $v\in V(H_i)\cap V(H_j)$, then it can be reduced to {\bf Case 1}. If $u\in V(H_i)\cap V(H_j)$, then we will consider it in {\bf Case 5}. So we assume $u,v\notin V(H_i)\cap V(H_j)$. { Denote by $P_1$ and $P_2$ the two internally disjoint $(x_i,y_i)$-paths in $H_i$. By the construction of $F$, there are $(y_i,x_j)$-path $P_3$ and $(y_j,x_i)$-path $P_4$ such that $P_1$, $P_2$, $P_3$ and $P_4$ are four internally disjoint paths in $F$, $P_1\cup P_3\cup H_j\cup P_4$ and $P_2\cup P_3\cup H_j\cup P_4$ are odd cycles.} Suppose $u\notin \{x_i, y_i\}$. Since $D_1=P_1\cup P_3\cup H_j\cup P_4$ and $D_2=P_2\cup P_3\cup H_j\cup P_4$ are odd cycles, $D_1\cup D_2\cup P\in \mathcal{G}_7$. Suppose $u\in \{x_i, y_i\}$.
Since $k\geq3$, we have $x_i\neq y_j$ or $x_j\neq y_i$. Without loss of generality, we assume $x_i\neq y_j$. Since $u,v\notin V(H_i)\cap V(H_j)$, we may let $u=x_i$ in the following.
Since $H_j$ is a path, by the definition of $\mathcal{E}'$, $H_{j-1}$ and $H_{j+1}$ are not paths, thus there is an even cycle $H_s$ ($s\in[k]$ and $s\neq i,j$) of $F$ such that $H_s\cap P_4$ is a path.   Let $P'$ be the $(u',v')$-path with $u',v'\in V(P_4)$, $P'\subseteq H_s$ and $P'\not\subseteq P_4$.
Let $D_1=P_1\cup P_3\cup H_j\cup P_4$. The two  internally disjoint $(u,v)$-paths of $D_1$ are denoted by $Q_1$ and $Q_2$, where $P_1\subseteq Q_1$. Obviously, $Q_1\cup P$ or $Q_2\cup P$ is an odd cycle. If $D_2=Q_1\cup P$ is an odd cycle, {  take the position of $v\in V(H_j)$ and the intersection of $\{u',v'\}$ and $\{x_i,y_j\}$ into consideration,} then $D_1\cup D_2\cup P'\in \mathcal{G}_1 \cup \mathcal{G}_2\cup \mathcal{G}_3$. If $D_2=Q_2\cup P$ is an odd cycle, then $D_1\cup D_2\cup P_2\in \mathcal{G}_2\cup \mathcal{G}_3$.

{\bf Case 4.} $u\in V(H_i)$ and $v\in V(H_j)$ ($i<j$), where $H_i$  and $H_j$ are two paths.

{  By the construction of $F$, there are $(y_i,x_j)$-path $P_1$ and $(y_j,x_i)$-path $P_2$ such that $P_1$, $P_2$, $H_i$ and $H_j$ are four internally disjoint paths of $F$ and $H_i\cup P_1\cup H_j\cup P_2$ is an odd cycle}. By the definition of $\mathcal{E}'$, we have $x_i\neq y_j$, $y_i\neq x_j$, $u,v\notin V(H_i)\cap V(H_j)$ and there are two different even cycles $H_s$ and $H_t$ ($s,t\in[k]$ and $s,t\neq i,j$) of $F$ such that $H_s\cap P_1$ is a path and $H_t\cap P_2$ is a path.  Let $P'_1$ be the $(u'_1,v'_1)$-path with $u'_1,v'_1\in V(P_1)$, $P'_1\subseteq H_s$ and $P'_1\not\subseteq P_1$. Let $P'_2$ be the $(u'_2,v'_2)$-path with $u'_2,v'_2\in V(P_2)$, $P'_2\subseteq H_t$ and $P'_2\not\subseteq P_2$.
Let $D_1=H_i\cup P_1\cup H_j\cup P_2$. The two  internally disjoint $(u,v)$-paths of $D_1$ are denoted by $Q_1$ and $Q_2$, where $P_1\subseteq Q_1$. Obviously, $Q_1\cup P$ or $Q_2\cup P$ is an odd cycle. If $D_2=Q_1\cup P$ is an odd cycle, then $D_1\cup D_2\cup P'_2\in \mathcal{G}_1 \cup \mathcal{G}_2\cup \mathcal{G}_3$. If $D_2=Q_2\cup P$ is an odd cycle, then $D_1\cup D_2\cup P'_1\in \mathcal{G}_1\cup\mathcal{G}_2\cup \mathcal{G}_3$.

{\bf Case 5.} $u,v \in V(H_i)$, where $H_i$ is a path and the new cycle generated by $H_i\cup P$ is an odd cycle.

{ There are two internally disjoint $(u,v)$-paths in $F$, say $P_1$ and $P_2$ such that $P_1\cup P_2$ is an odd cycle and} $P\cup P_1$ is the new odd cycle generated by $H_i\cup P$. By the definition of $\mathcal{E}'$, there is an even cycle $H_s$ ($s\in[k]$ and $s\neq i$) of $F$ such that $H_s\cap P_2$ is a path.  Let $P'$ be the $(u',v')$-path with  $u',v'\in V(P_2)$, $P'\subseteq H_s$ and $P'\not\subseteq P_2$. Let $D_1=P_1\cup P_2$ and $D_2=P_1\cup P$. {  Since $H_i$ is a path, we have $|\{u',v'\}\cap \{x_i,y_i\}|\le1$ by the construction of $F$.} Therefore, $D_1\cup D_2\cup P'\in \mathcal{G}_1\cup\mathcal{G}_2$.

The proof of Lemma \ref{mainlem} is thus complete.
 \end{proof}

{ {\bf Proof of Theorem~\ref{mainthm}.}} Let $G$ be a $(3,2)$-critical graph without isolated vertices with at least five odd cycles. By Theorem~\ref{mthm}, $G$ contains a graph $H$ from $\mathcal{G}_4\cup\mathcal{G}_5$ (see Figure~\ref{fig15}) as a subgraph and $P_2\cup P_3$ is an even cycle in $H$. Obviously, $H\in \mathcal{E}'$.
By Lemma~\ref{conn},  $G$ is nonseparable.
Then by Proposition \ref{ear}, $G$ has a decomposition as $D_0, Q_1,\ldots, Q_\ell$ and $G=D_0\cup Q_1\cup\ldots\cup Q_\ell$, where $D_0=H\in\mathcal{G}_4\cup\mathcal{G}_5$, $Q_1$ is an open ear of $D_0$, and $Q_i$ is an open ear of $D_{i-1}=D_{i-2}\cup Q_{i-1}$ for $2\leq i\leq \ell$.
By combining Lemma \ref{mainlem} with Theorem~\ref{mthm},  $D_i=D_{i-1}\cup Q_i\in \mathcal{E}'\cup \mathcal{E}$ for $i\in [\ell-1]$. Note that if $F\in\mathcal{E}'$, then there exists an edge $e\in E(F)$ such that $\chi(F-e)=2$. Since $G$ is a $(3,2)$-critical graph, we have $G=D_\ell=D_{\ell-1}\cup Q_\ell\in \mathcal{E}$.

 Thus, we complete the proof.\hfill\vrule height3pt width6pt depth2pt\bigskip

\noindent{\bf Acknowledgements.} The authors would like to thank the two referees very much for their valuable suggestions and comments.

Lei was partially supported by the National
Natural Science Foundation of China (No. 12001296) and the Fundamental Research Funds
for the Central Universities, Nankai University (No. 63201163). Shi was partially supported by the National
Natural Science Foundation of China (No. 11922112), the Natural Science Foundation of Tianjin (Nos. 20JCJQJC00090 and 20JCZDJC00840) and the Fundamental Research Funds for the Central Universities, Nankai University (No. 63206034). Wang was partially supported by the National Natural Science Foundation of China (No. 12071048) and the Science
and Technology Commission of Shanghai Municipality (No. 18dz2271000).


\begin{thebibliography}{10}
\bibitem{arumugam-2008}
S.~Arumugam, I.~Sahul Hamid and A.~Muthukamatchi,
Independent domination and graph colorings,
{\it Ramanujan Math.\ Soc.\ Lect.\ Notes Ser.} {\bf7}(2008) 195--203.

\bibitem{BM}
J.A. Bondy and U.S.R Murty, Graph Theory.
 Springer-Verlag, New York (2008).
\bibitem{1}
B. Bre\v{s}ar, S. Klav\v{z}ar and N. Movarraei, Critical graphs for the chromatic edge-stability number, {\it Discrete Math.} {\bf 343}(2020) 111845.
\bibitem{2}
S. Akbari, S. Klav\v{z}ar, N. Movarraei and M. Nahvi, Nordhaus-gaddum and other bounds for the chromatic edge-stability number, {\it European J. Combin.} {\bf 84}(2020) 103042.
\bibitem{4}
T. Do\v{s}li\'{c} and D. Vuki\v{c}evi\'{c}, Computing the bipartite edge frustration of fullerene graphs, {\it Discrete Appl. Math.} {\bf155}(2007) 1294--1301.
%

\bibitem{8}
A. Kemnitz, M. Marangio and N. Movarraei, On the chromatic edge stability number of graphs, {\it Graphs Combin.} {\bf34}(2018) 1539--1551.

\bibitem{14}
W. Staton, Edge deletions and the chromatic number, {\it Ars Combin.} {\bf10}(1980) 103--106.

\bibitem{W1932}
H. Whitney, Non-separable and planar graphs, {\it Trans. Amer. Math. Soc.} {\bf34}(1932) 339--362.




\end{thebibliography}
\end{document}